\documentclass[10pt]{article}
\usepackage{amssymb}
\usepackage{amsmath}
\usepackage{amsthm}
\usepackage{amsfonts}
\usepackage[ansinew]{inputenc}
\usepackage{amsthm,amsfonts}
\makeatletter \@addtoreset{equation}{section} \makeatother

\newtheorem{theorem}{Theorem}[section]

\newtheorem{lemma}{Lemma}[section]

\newtheorem{remark}{Remark}[section]

\oddsidemargin=0pt
\evensidemargin=0pt
\textwidth=6.5in

\makeatletter
\renewcommand*{\@biblabel}[1]{\hfill#1.}
\makeatother

\headsep=1cm
 
\title{An Existence Result for the Generalized Vector Equilibrium Problem on Hadamard Manifold.}
\author{Batista E. E. A.\thanks{IME, Universidade Federal de Goi\'as,
Goi\^ania, GO 74001-970, BR ({\tt edvaldoelias1@hotmail.com}).}
\and G. C. Bento\thanks{IME, Universidade Federal de Goi\'as,
Goi\^ania, GO 74001-970, BR ({\tt glaydston@ufg.br}).}
\and
 Ferreira, O. P.
\thanks{IME, Universidade Federal de Goi\'as,
Goi\^ania, GO 74001-970, BR({\tt orizon@ufg.br}).}
}
\begin{document}
\maketitle

\vspace{-.3cm}

\begin{abstract}
IA sufficient condition for the existence of a solution for generalized vector equilibrium problem (GVEP) on Hadamard manifold, by using  a version of  KKM lemma  on this context,  is presented in this paper.  It is worth to point out that, in particular,   existence result  of solution for optimization problems, vector optimization problems, Nash equilibria problems, complementarity problems and variational inequality problems can be obtained as a special case of the existence result  for GVEP in this new context.
\end{abstract}
{\bf Keywords:} Vector equilibrium problem\;$\cdot$\; Vector optimization\;$\cdot$\; Hadamard  manifold.

\noindent{\bf AMS subject classification:} \,90C33\,$\cdot$\,65K05\,$\cdot$\, 47J25  $\cdot$\,91E10. 

\section{Introduction} 
The generalized vector equilibrium problem (denoted by GVEP)  has been widely studied and is a very active field of research. One of the motivations is that several problems can be formulated as a generalized vector equilibrium problem, for instance, \mbox{optimization} problems, vector optimization problem, Nash equilibria problems, complementarity problems, fixed point problems and variational inequality problems. An extensive development is found, e.g., in    Fu~\cite{FU2000}, Fu and Wan \cite{FUAn2002}, Konnov and Yao\cite{KonnovYao1999}, Ansari et al. \cite{AnsarKonnovYao2001}, Farajzadeh et al. \cite{FarajzadehAmini-Harandi2008} and  their references. An important issue is under what conditions there exists a solution to GVEP. In the linear setting, several authors have provided results answering this question; see, for instance, , e.g., \cite{ANSARI1999},    Fu~\cite{FU2000}, Fu and Wan \cite{FUAn2002}, Konnov and Yao\cite{KonnovYao1999}, Ansari et al. \cite{AnsarKonnovYao2001}, Farajzadeh et al. \cite{FarajzadehAmini-Harandi2008} and  their references.

The first papers  dealing with the  subject on  existence of solution for equilibrium problems in the Riemannian context were Colao et al.~\cite{CLMM2012} and  Zhou and Huang\cite{ZhouHuang2009}, by generalizing  KKM lemma  to  Hadamard manifold.   By using   KKM lemma  on Hadamard manifold Zhou and Huang \cite{Zhou2013} have obtained  result of existence for vector optimization problems and vector variational inequality on this context.   In a similar manner   Li and Huang \cite{XiaHuang2013} have present result of existence for special class of  GVEP.  In the present work, we  use   KKM lemma  on Hadamard manifold  for proving existence result for GVEP. As far as we know it is  a new contribution of this paper.   It is worth to point out that our result include the results of  \cite{CLMM2012, Zhou2013} and not is included in the paper \cite{XiaHuang2013}.

The organization of the paper is as follows.
In Section \ref{sec2},  some notations and  basic results  used in the paper are presented. In Section \ref{s.main} the main results are stated and proved.  Some final remarks are made in Section~\ref{fr}.

\section{Notations and basics results}\label{sec2}
\subsection{Riemanian Geometry}
In this section, we recall some fundamental and basic concepts needed for reading this paper. These results and concepts can be found in the books on Riemannian geometry, see do Carmo~\cite{MP1992} and Sakay~\cite{S1996}.

Let $M$ be a $n$-dimensional connected manifold. We denote by $T_xM$ the $n$-dimensional {\it tangent space} of $M$ at $x$, by $TM=\cup_{x\in M}T_xM$ {\itshape{the tangent bundle}} of $M$ and by ${\cal X}(M)$ the space of smooth vector fields over $M$. When $M$ is endowed with a Riemannian metric $\langle .\,,\,. \rangle$, with the corresponding norm denoted by $\| . \|$, then $M$ is a Riemannian manifold. Recall that the metric can be used to define the length of piecewise smooth curves $\gamma:[a,b]\rightarrow M$ joining $x$ to $y$, i.e., $\gamma(a)=x$ and $\gamma(b)=y$, by $l(\gamma):=\int_a^b\|\gamma^{\prime}(t)\|dt$, and moreover, by minimizing this length functional over the set of all such curves, we obtain a Riemannian distance $d(x,y)$ inducing the original topology on $M$. We denote by $B(x,\epsilon)$ the Riemannian ball on $M$ with center $x$ and radius $\epsilon>0$. A vector field $V$ along $\gamma$ is said to be {\it parallel} iff $\nabla_{\gamma^{\prime}} V=0$. If $\gamma^{\prime}$ itself is parallel we say that $\gamma$ is a {\it geodesic}. Given that the geodesic equation $\nabla_{\ \gamma^{\prime}} \gamma^{\prime}=0$ is a second-order nonlinear ordinary differential equation, we conclude that the geodesic \mbox{$\gamma=\gamma _{v}(.,x)$} is determined by its position $x$ and velocity $v$ at $x$. It is easy to verify that $\|\gamma ^{\prime}\|$ is constant.  The restriction of a geodesic to a closed bounded interval is called a {\it geodesic segment}.  We usually do not distinguish between a geodesic and its geodesic segment, as no confusion can arise.  A Riemannian manifold is {\it complete} iff the geodesics are defined for any values of $t$. The \mbox{Hopf-Rinow's} Theorem~(\cite[Theorem 2.8, page 146]{MP1992} or~\cite[Theorem~{1.1}, page 84]{S1996}) asserts that, if this is the case, then  $( M, d)$ is a complete metric space and, bounded and closed subsets are compact. From the completeness of the Riemannian manifold $M$, the {\it exponential map} $\exp_{x}:T_{x}  M \to M $ is defined by $\exp_{x}v\,=\, \gamma _{v}(1,x)$, for each $x\in M$.  A complete simply-connected Riemannian manifold of nonpositive sectional curvature is called an Hadamard manifold.  It is known that if $M$ is a Hadamard manifold, then $M$ has the same topology and differential structure as the Euclidean space $\mathbb{R}^n$; see, for instance,~\cite[Lemma 3.2, page 149]{MP1992} or \cite[Theorem 4.1, page 221]{S1996}. Furthermore, are known some similar geometrical properties to the existing in Euclidean space $\mathbb{R}^n$, such as, given two points there exists an unique geodesic segment that joins them.  {\it In this paper, all manifolds $M$ are assumed to be Hadamard and finite dimensional}.

\subsection{Convexity} \label{sec3}


A set $\Omega\subset M$ is said to be {\it convex}  iff any
geodesic segment with end points in $\Omega$ is contained in
$\Omega$, that is, iff $\gamma:[ a,b ]\to M$ is a geodesic such that $x =\gamma(a)\in \Omega$ and $y =\gamma(b)\in \Omega$, then $\gamma((1-t)a + tb)\in \Omega$ for all $t\in [0,1]$.  For an arbitrary set $\mathcal{B} \subset M$ the {\it convex hull} of $\mathcal{B}$ denoted by $ \mbox{co}(\mathcal{B})$, that is, the smallest convex subset of $M$ containing $\mathcal{B}$. Let $\Omega\subset M$ be a convex set.  

\subsection{Set-valued mapping} \label{sec3}

For any  set $\mathcal{A}$ we denote $2^{\mathcal A}$  the set of all subset of $\mathcal{A}$.   Let $M$ be a Hadamard manifold,   $\Omega \subset M$ a nonempty  set. For $T: \Omega \rightarrow 2^{\mathbb{Y}}$ a set-valued mapping  the {\it domain} and the {\it range}  are the sets, respectively, defined by
\begin{equation} \label{eq:dr}
\mbox{dom}T:=\left\{ x\in \Omega ~: ~T(x)\neq \varnothing \right\}, \qquad   \mbox{rge} T:=\left\{ y \in {\mathbb{Y}} ~: ~y\in T(x) \mbox{ for some } x \in \Omega \right\}, 
\end{equation}
and the {\it inverse} is set-valued mapping $T^{-1}:= {\mathbb{Y}} \rightarrow 2^{\Omega} $ defined by
\begin{equation} \label{eq:dr}
T^{-1}(y):=\left\{x\in \Omega ~:~ y\in T(x) \right\}.
\end{equation}
A set-valued mapping $T: \Omega \rightarrow 2^{\mathbb{Y}}$ is said to be \emph{upper semicontinuous} on $\Omega$ if, for each $x_0\in \Omega$ and any open set $V$ in $\Omega$ containing $T(x_0)$, exists an open neighborhood $U$ of $x_0$ in $\Omega$ such that $T(x)\subset V$ for all $x\in U$. 

Next result is a version  of   KKM lemma  in Riemannian context  due to \cite{CLMM2012},  which is an extension to Hadamard manifold of KKM theorem see,  for example,  \cite{TARAF1987}.  
 \begin{lemma}  \label{le:colao}
Let $M$ be a Hadamard manifold,   $\Omega \subset M$ a nonempty closed convex set and  $G:\Omega \rightarrow 2^{\Omega}$  a set-valued mapping such that, for each $y\in\Omega, G(y)$ is closed. Suppose that
\begin{itemize}
\item [(i)] there exists $y_0\in\Omega$ such that $G(y_0)$ is compact;
\item [(ii)] $\forall ~ y_1,...,y_m\in \Omega,~\emph{co}(\{y_1,...,y_m\})\subset\bigcup_{i=1}^{m}G(y_i)$.
\end{itemize}
Then $\bigcap_{y\in \Omega}^{}G(y)\neq \varnothing.$
\end{lemma}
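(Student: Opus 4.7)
The plan is to adapt the classical proof of the KKM theorem to the Hadamard setting. The strategy has two layers: first, reduce the full intersection statement to a finite intersection property using compactness of $G(y_0)$; second, establish the finite intersection property by a Brouwer-type fixed point argument on the geodesic convex hull of finitely many points.

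For the reduction, note that each $G(y)$ is closed by hypothesis and $G(y_0)$ is compact. Therefore the family $\{G(y)\cap G(y_0)\}_{y\in\Omega}$ consists of closed subsets of the compact Hausdorff space $G(y_0)$, and by the finite intersection characterization of compactness it suffices to show that for every finite collection $y_1,\dots,y_m\in\Omega$ (which we may enlarge to include $y_0$), the intersection $\bigcap_{i=1}^m G(y_i)$ is nonempty.

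For the finite case, I would argue by contradiction. Fix $y_1,\dots,y_m$ and set $K:=\mbox{co}(\{y_1,\dots,y_m\})$; on a Hadamard manifold this is a compact convex set, and via the exponential map at any of its points it is homeomorphic to a compact convex subset of a Euclidean space. Assuming $\bigcap_{i=1}^m G(y_i)=\varnothing$, the continuous functions $\lambda_i(x):=d(x,G(y_i))$ satisfy $\sum_{j=1}^m \lambda_j(x)>0$ on $K$, so the normalized weights $\mu_i(x):=\lambda_i(x)/\sum_j\lambda_j(x)$ form a continuous partition of unity with the key property that $\mu_i(x)=0$ if and only if $x\in G(y_i)$. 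Define $f:K\to K$ by letting $f(x)$ be the Riemannian (Karcher) barycenter of $y_1,\dots,y_m$ with weights $\mu_i(x)$; on a Hadamard manifold this barycenter exists, is unique as the minimizer of the strictly convex function $z\mapsto\sum_i\mu_i(x)\,d(z,y_i)^2$, depends continuously on the weights, and lies in $K$ by convexity of $K$. Brouwer's theorem yields $x^*\in K$ with $f(x^*)=x^*$. Setting $I:=\{i:\mu_i(x^*)>0\}=\{i:x^*\notin G(y_i)\}$, geodesic convexity gives $x^*=f(x^*)\in\mbox{co}(\{y_i:i\in I\})$, and hypothesis (ii) applied to the subcollection $\{y_i\}_{i\in I}$ forces $x^*\in G(y_j)$ for some $j\in I$, contradicting the definition of $I$.

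The main obstacle is the construction of the continuous barycenter map $f:K\to K$: in Euclidean space this is the trivial convex combination $x\mapsto\sum_i\mu_i(x)y_i$, whereas on the manifold one must invoke the existence, uniqueness, continuity, and convex containment of the Karcher mean, all of which depend crucially on nonpositive sectional curvature. A secondary technicality is verifying that $K$ is compact and homeomorphic to a Euclidean convex body so that Brouwer's fixed point theorem applies directly; this uses that $\exp_p$ is a global diffeomorphism on a Hadamard manifold together with the fact that the convex hull of a finite set is bounded and closed, hence compact by Hopf-Rinow.
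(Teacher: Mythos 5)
The paper does not actually prove this lemma; it simply cites \cite[Lemma 3.1]{CLMM2012}, so your proposal is a self-contained alternative, and its overall architecture (reduce to the finite intersection property via compactness of $G(y_0)$, then run a partition-of-unity plus fixed-point argument on the convex hull, with the Karcher barycenter replacing Euclidean convex combinations) is a legitimate and standard route. However, two steps as written are genuine gaps. First, you assert that $K=\mbox{co}(\{y_1,\dots,y_m\})$ is compact. On a Hadamard manifold the geodesic convex hull of finitely many points is bounded, but whether it is even closed is not known in general (for three points in dimension at least $3$ this is a known open question), so compactness of $K$ cannot be taken for granted. The repair is to work with the closed convex hull $\overline{\mbox{co}}$ throughout: it is convex, and compact by Hopf--Rinow; hypothesis (ii) survives the replacement because $\bigcup_{i\in I}G(y_i)$ is a finite union of closed sets, hence closed, so it contains $\overline{\mbox{co}}(\{y_i:i\in I\})$ whenever it contains $\mbox{co}(\{y_i:i\in I\})$ --- which is exactly what the endgame of your contradiction needs.

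Second, your justification for invoking Brouwer is false: $\exp_p^{-1}$ straightens only the geodesics through $p$, so the preimage of a convex set under $\exp_p$ need not be convex, and $K$ is not exhibited as homeomorphic to a Euclidean convex body by this device. What you should invoke instead is that a compact convex subset $\overline{K}$ of a Hadamard manifold has the topological fixed point property: the nearest-point projection $P_{\overline{K}}:M\to\overline{K}$ is nonexpansive, hence continuous, so $\overline{K}$ is a retract of $M\cong\mathbb{R}^n$; applying Brouwer to $f\circ P_{\overline{K}}$ on a large closed ball produces a fixed point that lies in $\overline{K}$ and is fixed by $f$. With these two repairs, together with the routine observations that the weighted Karcher mean lies in every closed convex set containing the points (so $f(\overline{K})\subset\overline{K}$ and $f(x^{*})\in\overline{\mbox{co}}(\{y_i:i\in I\})$) and that each $G(y_i)$ is nonempty by (ii) applied to the single point $y_i$ (so that $\mu_i(x)=0$ iff $x\in G(y_i)$ indeed uses closedness of a nonempty set), your argument goes through.
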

\begin{proof}
See, \cite[Lemma 3.1]{CLMM2012}. 
\end{proof}

\section{Generalized  vectorial   equilibrium problem} \label{s.main}

In this section, we present a sufficient condition for the existence of solution of generalized vector equilibrium problem on Hadamard manifolds. From now on, $\Omega \subset M$ will denote a nonempty closed convex set and $ \mathbb{Y}$ denotes  a topological vector space. Let $C: \Omega \rightarrow 2^{\mathbb{Y}}$ be set-valued mapping such that
\begin{equation} \label{eq:cx}
 C(x) ~  \mbox{ is a closed convex cone},  \qquad \mbox{int}\,C(x)\neq \varnothing, \qquad  \forall~   x \in \Omega.
\end{equation}
A set-valued mapping  $F:\Omega\times\Omega \rightarrow 2^{\mathbb{Y}}$  is called $C_x$ - \emph{quasiconvex-like} if for any geodesic segment $\gamma:[0, 1]\to \Omega$  there holds
 $$
 F(x,\gamma(t))\subseteq F(x,\gamma(0))-C(x) \quad ~ \mbox{or} ~ \quad  F(x,\gamma(t))\subseteq F(x,\gamma(1))-C(x), \qquad \forall \; t\in [0,1]. 
 $$
Then, given a set-valued mapping  $F: \Omega\times \Omega \rightarrow 2^{\mathbb{Y}}$, the {\it generalized vector equilibrium problem} in the Riemannian context (denoted by GVEP) consists in:
\begin{equation}\label{eq:p}
\mbox{Find $x^{*}\in \Omega$}:\quad F(x^\ast,y) \nsubseteq -\mbox{int}\,C(x^{*}),\qquad \forall \ y \in \Omega.
\end{equation}
Next result is closed related to   \cite[Theorem 2.1]{ANSARI1999}.  It  establish an existence result of solution for GVEP as an application  of Lemma~\ref{le:colao}. 

\begin{theorem} \label{th:main}
Let $F:\Omega\times\Omega \rightarrow 2^{\mathbb{Y}}$ be a set-valued mapping such that  for each $x, y\in\Omega$ 
\begin{itemize}
\item [{\bf h1}.]   $F(x,x)\not\subset -\emph{int}\,C(x)$;
\item [{\bf h2}.]  $F(\cdot,y)$ is upper semicontinuos;
\item [{\bf h3}.] $F$ is $C_x$-quasiconvex-like;
\item [{\bf h4}.] there exist $D\subset\Omega$   compact  and  $ y_0 \in\Omega$  such that $x\in\Omega\backslash D \Rightarrow F(x,y_0)\subset -\emph{int}\,C(x)$.
\end{itemize}
Then the solution set $S^{\ast}$ of  the GVEP defined in \eqref{eq:p} is a compact  nonempty set. 
\end{theorem}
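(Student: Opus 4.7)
The plan is to recast the GVEP as the nonemptiness of an intersection so that Lemma~\ref{le:colao} applies. Define the set-valued map $G:\Omega\to 2^{\Omega}$ by
$$G(y):=\{x\in\Omega : F(x,y)\nsubseteq -\mbox{int}\,C(x)\},$$
and observe that, by construction, $x^\ast$ solves the GVEP in \eqref{eq:p} iff $x^\ast\in\bigcap_{y\in\Omega}G(y)$. Hence it suffices to verify the two hypotheses of Lemma~\ref{le:colao} for $G$.

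First I would check that each $G(y)$ is closed by showing its complement $\{x\in\Omega : F(x,y)\subseteq -\mbox{int}\,C(x)\}$ is open: this combines h2 with the openness of $-\mbox{int}\,C(x)$, together with standard closed-graph properties of the multifunction $x\mapsto \mathbb{Y}\setminus(-\mbox{int}\,C(x))$ implicit in the setting. Hypothesis (i) of the lemma then comes for free from h4: for any $x\in\Omega\setminus D$, $F(x,y_0)\subseteq -\mbox{int}\,C(x)$ forces $x\notin G(y_0)$, so $G(y_0)\subseteq D$, and $G(y_0)$ is closed inside the compact set $D$, hence compact.

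The principal step is the KKM-type condition (ii): $\mbox{co}(\{y_1,\dots,y_m\})\subseteq\bigcup_{i=1}^m G(y_i)$ for every finite family in $\Omega$. I would argue by contradiction, assuming some $x\in\mbox{co}(\{y_1,\dots,y_m\})$ satisfies $F(x,y_i)\subseteq -\mbox{int}\,C(x)$ for every $i$. Fixing that $x$, introduce the auxiliary set
$$L:=\{z\in\Omega : F(x,z)\subseteq -\mbox{int}\,C(x)\},$$
which by assumption contains $\{y_1,\dots,y_m\}$. The crucial claim is that $L$ is convex. Indeed, for $z_1,z_2\in L$ and the unique geodesic $\gamma\subset\Omega$ joining them, h3 yields that for each $t\in[0,1]$ either $F(x,\gamma(t))\subseteq F(x,z_1)-C(x)$ or $F(x,\gamma(t))\subseteq F(x,z_2)-C(x)$; combined with $F(x,z_j)\subseteq -\mbox{int}\,C(x)$ and the standard cone inclusion $-\mbox{int}\,C(x)-C(x)\subseteq -\mbox{int}\,C(x)$ (valid because $\mbox{int}\,C(x)$ is an open convex cone and $C(x)$ a convex cone), this gives $F(x,\gamma(t))\subseteq -\mbox{int}\,C(x)$, so $\gamma(t)\in L$. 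Since $L$ is then a convex set containing $\{y_1,\dots,y_m\}$, it contains $\mbox{co}(\{y_1,\dots,y_m\})$, hence $x\in L$, i.e., $F(x,x)\subseteq -\mbox{int}\,C(x)$, contradicting h1. I expect this convex-hull argument to be the main obstacle, as it is the only place where h3 is genuinely used and it depends on the fact that in a Hadamard manifold the unique geodesic joining two points of the convex set $\Omega$ lies in $\Omega$.

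Applying Lemma~\ref{le:colao} then produces $S^\ast=\bigcap_{y\in\Omega}G(y)\neq\varnothing$. Compactness of $S^\ast$ is immediate: $S^\ast\subseteq G(y_0)\subseteq D$ sits inside a compact set, and is closed as an intersection of closed sets $G(y)$.
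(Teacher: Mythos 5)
Your proposal is correct and rests on exactly the same ingredients as the paper's proof: the KKM lemma (Lemma~\ref{le:colao}) applied to $G(y)=\{x\in\Omega: F(x,y)\nsubseteq-\mathrm{int}\,C(x)\}$, which is precisely the paper's $P^{-1}(y)^{c}$, with closedness from {\bf h2}, compactness of $G(y_0)$ from {\bf h4}, and the convexity of $\{z: F(x,z)\subseteq-\mathrm{int}\,C(x)\}$ via {\bf h3} and the inclusion $-\mathrm{int}\,C(x)-C(x)\subseteq-\mathrm{int}\,C(x)$ doing the real work. The only difference is organizational: the paper argues by global contradiction (assuming $S^{\ast}=\varnothing$, deducing $\bigcap_y G(y)=\varnothing$, and invoking the contrapositive of the KKM lemma to violate {\bf h1}), whereas you verify the KKM hypothesis (ii) directly and apply the lemma in the forward direction; your route is cleaner and also dispenses with the paper's nonemptiness claim for $P(x)$, which is only needed to run the reductio. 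Two further remarks. First, you explicitly prove compactness of $S^{\ast}$ ($S^{\ast}=\bigcap_y G(y)$ is closed and contained in $G(y_0)\subseteq D$), a part of the statement the paper's proof silently omits, so this is a genuine improvement. Second, your caveat about the openness of $\{x: F(x,y)\subseteq-\mathrm{int}\,C(x)\}$ is well placed: upper semicontinuity of $F(\cdot,y)$ at $x_0$ only yields $F(x,y)\subseteq-\mathrm{int}\,C(x_0)$ for $x$ near $x_0$, not $F(x,y)\subseteq-\mathrm{int}\,C(x)$, so some continuity-type hypothesis on the cone map $x\mapsto C(x)$ (e.g., that $\{x:\, F(x,y)\subseteq -\mathrm{int}\,C(x)\}$ is open, or that $W(x)=\mathbb{Y}\setminus(-\mathrm{int}\,C(x))$ has closed graph) is genuinely needed; the paper's own proof glosses over this same point, so it is not a defect of your argument relative to the paper.
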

From now on we assume that every assumptions on Theorem~\ref{th:main} hold.  In order to prove Theorem~\ref{th:main} we need of some  preliminaries. First we define the set-valued mapping  $P:\Omega \rightarrow 2^{\Omega}$ given by 
\begin{equation} \label{eq:set}
P(x):= \left\{y\in\Omega~:~F(x,y)\subset-\mbox{int}\,C(x) \right\}.
\end{equation}
\begin{lemma} \label{l:at1}
If    $S^{\ast}=\varnothing$ then, for each $x, y\in\Omega$, the set-valued mapping $P$  satisfies the following conditions:
\begin{itemize}
\item[i)]  the set $P(x)$ is nonempty and convex; 
\item[ii)] $P^{-1}(y)$ is an open set and  $\bigcup_{y\in\Omega}^{}P^{-1}(y)=\Omega$;
\item[iii)]  there exists $y_0\in\Omega$ such that $P^{-1}(y_0)^{c}$ is compact.
\end{itemize}
\end{lemma}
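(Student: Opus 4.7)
The plan is to verify the three items in order, using only the defining property of $P$ together with hypotheses h1--h4 and the standing assumption $S^{\ast}=\varnothing$. Nonemptiness of $P(x)$ in (i) is immediate: if $S^{\ast}=\varnothing$ then no $x\in\Omega$ solves \eqref{eq:p}, so for each such $x$ there must exist $y\in\Omega$ with $F(x,y)\subseteq -\mbox{int}\,C(x)$, i.e.\ $y\in P(x)$. For convexity, I would take $y_1,y_2\in P(x)$ and the unique geodesic $\gamma:[0,1]\to M$ joining them, which stays in $\Omega$ by convexity of $\Omega$. Hypothesis h3 supplies, for each $t$, an index $j\in\{1,2\}$ with $F(x,\gamma(t))\subseteq F(x,y_j)-C(x)$; since $F(x,y_j)\subseteq -\mbox{int}\,C(x)$ and $C(x)$ is a convex cone with nonempty interior, the standard cone identity $-\mbox{int}\,C(x)-C(x)\subseteq -\mbox{int}\,C(x)$ gives $\gamma(t)\in P(x)$.

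For item (ii) I would prove instead that the complement $P^{-1}(y)^{c}=\{x\in\Omega:F(x,y)\not\subseteq -\mbox{int}\,C(x)\}$ is closed. If $x$ satisfies $F(x,y)\subseteq -\mbox{int}\,C(x)$, then $-\mbox{int}\,C(x)$ is an open set containing $F(x,y)$, so hypothesis h2 (upper semicontinuity of $F(\cdot,y)$) yields a neighborhood of $x$ on which $F(x',y)\subseteq -\mbox{int}\,C(x)$ holds; combined with the tacit continuity of the cone-valued map $x\mapsto C(x)$ (specifically that $x\mapsto \mathbb{Y}\setminus(-\mbox{int}\,C(x))$ has closed graph, which is the standard assumption in this line of work), this forces $F(x',y)\subseteq -\mbox{int}\,C(x')$ near $x$, so $P^{-1}(y)$ is open. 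The equality $\bigcup_{y\in\Omega}P^{-1}(y)=\Omega$ is then just a reformulation of item (i): $x$ lies in the union iff $P(x)\neq\varnothing$, which we have already shown.

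For item (iii), hypothesis h4 produces a compact $D\subset\Omega$ and a point $y_0\in\Omega$ such that $F(x,y_0)\subseteq -\mbox{int}\,C(x)$ for every $x\in\Omega\setminus D$, i.e.\ $y_0\in P(x)$, i.e.\ $x\in P^{-1}(y_0)$. Passing to complements, $P^{-1}(y_0)^{c}\subseteq D$; by item (ii) the set $P^{-1}(y_0)^{c}$ is closed, so it is a closed subset of the compact set $D$ in the Hausdorff manifold $M$, hence compact.

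The main obstacle is the openness claim in item (ii): it genuinely needs some semicontinuity of $x\mapsto C(x)$ beyond the pointwise condition \eqref{eq:cx}, since otherwise one cannot pass from $F(x_n,y)\not\subseteq -\mbox{int}\,C(x_n)$ to the analogous property at the limit. The remaining parts reduce to bookkeeping with the definition of $P$ and the cone-stability identity used in the convexity step.
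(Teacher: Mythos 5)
Your proof follows the paper's argument essentially step for step: the same use of $S^{\ast}=\varnothing$ for nonemptiness of $P(x)$, the same combination of the quasiconvexity hypothesis {\bf h3} with the cone identity $-\mbox{int}\,C(x)-C(x)\subseteq-\mbox{int}\,C(x)$ for convexity, the same derivation of the covering $\bigcup_{y\in\Omega}P^{-1}(y)=\Omega$ from item (i), and the same deduction of compactness in (iii) from {\bf h4} together with closedness of $P^{-1}(y_0)^{c}$ inside the compact set $D$. (Minor point: the paper's text invokes {\bf h1} in the convexity step, but this is evidently a typo for {\bf h3}, which is what both you and it actually use.)

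The one place you diverge is the openness of $P^{-1}(y)$, and your added hypothesis there is not cosmetic. Upper semicontinuity of $F(\cdot,y)$ at $x_0\in P^{-1}(y)$ only yields a neighborhood $V_{x_0}$ on which $F(x,y)\subseteq-\mbox{int}\,C(x_0)$, i.e.\ containment in the open set attached to the \emph{fixed} point $x_0$, whereas membership of $x$ in $P^{-1}(y)$ requires $F(x,y)\subseteq-\mbox{int}\,C(x)$ with the cone varying along with $x$. The paper's proof passes from the former to the latter without comment, so as written it is only valid when $C(\cdot)$ is constant or under some continuity of $x\mapsto C(x)$ --- precisely the closed-graph condition on $x\mapsto\mathbb{Y}\setminus(-\mbox{int}\,C(x))$ that you import from the Euclidean literature (Ansari--Yao). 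Your proposal is therefore correct modulo that explicitly stated extra assumption, and in flagging it you have identified a genuine gap in the paper's own argument rather than introduced one of your own.
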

\begin{proof}
Since   the solution set $S^{\ast}=\varnothing$,  the definition  in \eqref{eq:set} allow us to conclude that $P(x)\neq\varnothing$, for all $x\in \Omega$, which proof the first statement in item $i$.  Let $x\in \Omega$. For proving that  $P(x)$ is convex, take    $y_1,y_2\in P(x)$ and a geodesic $\gamma: [0, 1] \to \Omega$ such that $\gamma(0)=y_1$ and $\gamma(1)=y_2$.  Using the assumption {\bf h1}  we have 
\begin{equation} \label{eq:itws}
 F(x,\gamma(t))\subseteq F(x,y_1)-C(x) \quad ~ \mbox{or} ~ \quad  F(x,\gamma(t))\subseteq F(x,y_2)-C(x). 
\end{equation}
As $y_1,y_2\in P(x)$, definition of $P(x)$ in \eqref{eq:set} implies that $F(x,y_1)\subset-\mbox{int}\,C(x)$  and $F(x,y_2)\subset-\mbox{int}\,C(x)$. Hence,  taking in account that $-\mbox{int}\,C(x) - C(x) \subset -\mbox{int}\,C(x)$  it follows from \eqref{eq:itws} that
$$
 F(x,\gamma(t))\subset -\mbox{int}\,C(x),
$$
and the proof of item $i$ is concluded.

Now we are going to prove item $ii$.  First of all note that using definition in  \eqref{eq:dr} we have
\begin{equation} \label{eq:pm1}
P^{-1}(y)=\{x\in \Omega~: ~y\in P(x)\}=\{x\in\Omega ~: ~F(x,y)\subset-\mbox{int}\, C(x)\}, 
\end{equation}
where the second equality follows from definition of the set $P(x)$ in \eqref{eq:set}.   Take  $x_0\in P^{-1}(y)$. From the second equality in \eqref{eq:pm1} and   that  $-\mbox{int}\,C(x)$ is an open set, using  {\bf h2},  there exists  $ V_{x_0}\subset \Omega$   an open set  such that  
$
 F(x,y)\subset-\mbox{int}\,C(x), 
$
for all $ x\in V_{x_0}.$
Therefore, $P^{-1}(y)$ is open, which proof the first part in  item $ii$.  Definition in \eqref{eq:pm1} implies that $P^{-1}(y)\subseteq \Omega$ for all $y\in \Omega$. For concluding the proof of item $ii$, is sufficient to prove that  $\Omega\subseteq \bigcup_{y\in\Omega}^{}P^{-1}(y)$.   Let  $x\in\Omega$. From  Item $i$  we  have $P(x)\neq\varnothing$ and hence  there exists $y\in P(x)$. Therefore  $x\in P^{-1}(y)$ for some $y\in \Omega$, which conclude the proof  of item $ii$. 

For proving  item $iii$, first note that  from assumption {\bf h4} and definition in \eqref{eq:pm1},  we have
$$
P^{-1}(y_0)^{c}=\{x\in\Omega~:~F(x,y_0)\not\subset-\mbox{int}\,C(x)\}\subset D, 
$$
for some $y_0\in \Omega$ and $D\subset \Omega$ a  compact set.  From item $i$  the set $P^{-1}(y_0)$ is an open set,  and since  $D$ is a compact set,  we conclude from last inclusion  that $P^{-1}(y_0)^{c}$  is a compact set and the proof of the proposition is concluded.
\end{proof}
Now we are ready to prove our main result, namely,   Theorem~\ref{th:main}.
\begin{proof} 
Let us suppose, by contradiction,  that he solution set   $S^{\ast}=\varnothing$. Let $ G :   \Omega \to  2^{\Omega}$ be  the set-valued mapping defined by
\begin{equation} \label{eq:faux}
G(y):=P^{-1}(y)^{c}.
\end{equation}
Define  the set $D:=\bigcap_{y\in \Omega}^{}G(y)$. We have two passibility  for the set $D$, namely,   $D\neq \varnothing$ or   $D= \varnothing$. If  $D\neq \varnothing$, i.e.,  $\bigcap_{y\in \Omega}^{}P^{-1}(y)^{c}\neq \varnothing$,  then we have 
$
\bigcup_{y\in \Omega}^{}P^{-1}(y)\neq \Omega,
$
which is a contradition with the item $ii$ in  Lemma~\ref{l:at1}. Hence,  we conclude that  $D= \varnothing$, i. e., 
$$
\bigcap_{y\in \Omega}^{}G(y)= \varnothing .
$$
Thus, as we are under the assumption  $S^{\ast}=\varnothing$, combining definition in \eqref{eq:faux} and    Lemma~\ref{l:at1} items ii and iii, we conclude  that,  for each $y\in\Omega$,  the set $G(y)$ is closed and  there exists $y_0\in\Omega$ such that $G(y_0)$ is a compact set. Hence,  as $\bigcap_{y\in \Omega}^{}G(y)= \varnothing$,  Lemma~\ref{le:colao}  implies  that there exist  $ y_1,...,y_m\in \Omega$ such that  
$$
\mbox{co}\{y_1,...,y_m\}\not\subset \bigcup_{i=1}^{m}G(y_i), 
$$
and  thus  there exists $x\in \mbox{co}\{y_1,...,y_m\}$ such that  $x\notin G(y_i)=P^{-1}(y_i)^{c} $ for all $i=1, \ldots m$ or,  equivalently,    there exists $x\in \mbox{co}\{y_1,...,y_m\}$ such that  $x\in P^{-1}(y_i) $ for all $i=1, \ldots m$.  Therefore,  we conclude that
\begin{equation} \label{eq:rmth}
\exists ~ y_1,...,y_m\in \Omega,  \quad \exists ~ x\in \mbox{co}\{y_1,...,y_m\};  \quad   y_i\in P(x), \quad    \forall ~i=1, \ldots m.
\end{equation}
   Taking into  account that  $S^{\ast}=\varnothing$, items i of Lemma~\ref{l:at1}  implies that $P(x)$ is convex, which together with  relations in \eqref{eq:rmth} yields    
   $$
   \exists ~  x  \in \Omega:~ ~x\in P(x).
   $$
 Last inclusion and definition in \eqref{eq:set} imply   that there exists  $x  \in \Omega$ such that  $F(x,x)\subset-\mbox{int}\,C(x)$, obtaining  a contradiction with the assumption {\bf h1} in Theorem~\ref{th:main}. Therefore,  the solution set  $S^{\ast}\neq\varnothing$ and the proof of Theorem~\ref{th:main} is concluded. 
\end{proof}

\begin{remark}
Note that, in particular,  when $M=\mathbb{R}^{n}$, our problem~\eqref{eq:p} retrieves a particular instance of the generalized vector equilibrium problem studied by Ansari and Yao in \cite{ANSARI1999}. In the case where $C(x)=\mathbb{R}_{+}$, for each $x\in \Omega$, $\mathbb{Y}=\mathbb{R}$ and $F$ is single-valued map from $\Omega \times \Omega$ to $\mathbb{R}$, then the problem \eqref{eq:p} reduces to the equilibrium problem on Hadamard manifold considered by Colao et al. in \cite{CLMM2012}; see also Bento et al.~\cite{BCSS2013}. Now, let us consider the following vector optimization problem on Hadamard manifold:
\begin{equation} \label{eq:vopt}
\left\{\begin{array}{c}
w-\emph{min}f(x),\\
s. t.\; x\in \Omega,
\end{array}\right.
\end{equation}
where $f:M\to\mathbb{R}^{m}$ vector function and $w-\emph{min}$ denotes weak minimum.  Zhou and Huang~\cite{Zhou2013} presented an existence result of solutions for the problem \eqref{eq:vopt} by showing the equivalence of this with the variational inequality problem on Hadamard manifold (studied by N\'emeth in \cite{N2003}):
\begin{equation}\label{prob:VVI}
\mbox{Find $x^{*}\in \Omega$}:\quad \langle A(x^*), \exp^{-1}_{x^*}y\rangle \notin -\mathbb{R}^m_{++},\qquad \forall \ y \in \Omega,
\end{equation}
in the particular case where $f$ is a differentiable and convex vector function  and $A$ is the Riemannian Jacobian of  $f$. Taking into account that $x^{*}\in \Omega$ is a weak minimum of \eqref{eq:vopt} iff
\[
f(x)-f(x^{*})\notin -\mathbb{R}_{++}^{m}, \qquad \forall x\in \Omega,
\]
we point out that, in particular,   Theorem~\ref{th:main} is an existence result of solution for the problem \eqref{eq:vopt}  even in the case where f is only quasi convex and not necessarily differentiable.
\end{remark}
\section{Final remarks} \label{fr}
In this paper we study  basics intrinsic properties of  generalized vector equilibrium problem in Hadamard manifolds, and we
touch only slightly  the equilibrium problem theory in this  context.  We expect that the results of this paper become a 
first step towards a more general theory, including hyperbolic spaces and algorithms for solving that problems on Hadamard manifolds. We foresee further progress in this topic in the  nearby future.

\end{document}